 \newtheorem{theorem}{Theorem}[section]
 \newtheorem{lemma}[theorem]{Lemma}
 \newtheorem{proposition}[theorem]{Proposition}
 \theoremstyle{definition}
 \theoremstyle{remark}
 \newtheorem*{example}{Example}
 \numberwithin{equation}{section}
 \newcommand{\R}{\mathbb{R}}
 \newcommand{\M}{\mathbf{M}}
\begin{document}

%
%
%
%
%
%
%
%
%

\title[Volume Distance to Hypersurfaces]
 {Volume Distance to Hypersurfaces: \newline Asymptotic Behavior of its Hessian }

\author[M.Craizer]{Marcos Craizer}

\address{%
Departamento de Matem\'{a}tica- PUC-Rio\br
Rio de Janeiro\br
Brazil}

\email{craizer@puc-rio.br}

\thanks{The authors want to thank CNPq for finnancial support during the preparation of this manuscript.}
\author[R.C.Teixeira]{Ralph C. Teixeira}
\address{Departamento de Matem\'{a}tica Aplicada- UFF \br
Rio de Janeiro\br
Brazil}
\email{ralph@mat.uff.br}
\subjclass{ 53A15}

\keywords{Volume distance, Floating bodies, Affine surface area, Affine shape operator}

\date{July 13, 2010}

\begin{abstract}
The volume distance from a point  $p$ to a convex hypersurface $M\subset\R^{N+1}$ is defined as the minimum $(N+1)$-volume
of a region bounded by $M$ and a hyperplane $H$ through the point. This function is differentiable in a neighborhood of $M$ and if we restrict its
hessian to the minimizing hyperplane $H(p)$ we obtain, after normalization, a symmetric bi-linear form $Q$.

In this paper, we prove that $Q$ converges to the affine Blaschke metric when we approximate the hypersurface along a curve whose points are centroids of parallel sections. 
We also show that the rate of this convergence is given by a bilinear form associated with the shape operator of 
$M$. These convergence results provide a geometric interpretation of the Blaschke metric and the shape operator in terms of the volume distance.
\end{abstract}

\maketitle

\section{Introduction}

Consider a strictly convex hypersurface $M\subset\R^{N+1}$,  a point $p$ in the convex side of $M$ and $n\in S^N$. Denote by $U(n,p)$ the region bounded by $M$ 
and a hyperplane $H(n,p)$ orthogonal to $n$ through $p$, with $n$ pointing outwards the region,
and by $V(n,p)$ its volume.  The {\it volume distance} $v(p)$ of $p$ to $M$ is defined as the minimum of $V(n,p)$, $n\in S^N$.

The volume distance is an important object in computer vision which has been extensively studied in the planar case $n=1$ (\cite{Giblin04}) and was also 
considered in the case $n=2$ (\cite{Betelu01}). For $n=1$, the hessian of the volume distance was studied in (\cite{Craizer08},\cite{Silva09}), where it is shown that its determinant equals $-1$. 
This property is not extended to higher dimensions. Nevertheless, we prove in this paper some asymptotic properties of the hessian of the volume distance in arbitrary dimensions.

A pair $(n,p)$ is called {\it minimizing} when $n$ is the minimum of $V(n,p)$ with $p$ fixed. A minimizing pair necessarily satisfies
\begin{equation}\label{eq:critical}
\frac{\partial V}{\partial n}(n,p)=0.
\end{equation}
It is proved in \cite{Leichtweiss98} that if $(n,p)$ satisfies \eqref{eq:critical}, then $p$ is the centroid of $R(n,p)$.

In order to obtain $n=n(p)$ implicitly defined by \eqref{eq:critical}, the second derivative of $V$ with respect to $n$ must be non-degenerate.  A formula for this second derivative
can also be found in \cite{Leichtweiss98}. 
From this formula, one concludes that the second derivative is positive definite in a half-neighborhood of $M$, i.e., the part of a neighborhood of $M$ contained in its convex side. 
Based on this, we verify that there exists a half-neighborhood 
$D$ of $M$ such that, for any $p\in D$, there exists a unique $n(p)$ that minimizes the map $n\to V(n,p)$. 
Moreover, the map $p\to n(p)$ is smooth and consequently $v(p)=V(n(p),p)$ is also smooth.

For $p\in D$, let 
\begin{equation}\label{eq:DefineQ}
Q(p)=\frac{1}{b(p)} \frac{\partial^2V}{\partial n^2}(n(p),p),
\end{equation}
where $b(p)$ denotes the $N$-dimensional volume of the region $R(p)\subset H(p)$ bounded by $M$. 
By making some calculations, we show that, for  $p\in D$,  
\begin{equation}\label{eq:CalculaQ}
-\frac{1}{b(p)} \left. D^2v(p) \right|_{H(p)}= Q^{-1}(p)
\end{equation}
where $ \left. D^2v(p) \right|_{H(p)}$ means the restriction of $D^2v(p)$ to $H(p)$.

This paper is concerned with the asymptotic behavior of the quadratic form $Q$. In order to motivate a bit more this study, we remark that this quadratic form is an important tool in the study of 
 {\it floating bodies}. 
When $M$ is the boundary of a convex body $K$,  one can define its floating body $K_{\delta}$, for $\delta>0$, by the property that each support hyperplane of $K_{\delta}$ cuts $K$ in a region of volume $\delta$. 
For smooth strictly convex bodies and $\delta$ sufficiently small, the convex bodies exist and 
its boundary is a smooth surface (see \cite{Leichtweiss98}).  In \cite{Meyer91}, the quadratic form $Q$ was a key ingredient in proving that $K_{\delta}$ is well defined for every $0<\delta\leq \frac{1}{2}vol(K)$ if and only if $K$ is symmetric 
with respect to a point. 
Also in \cite{Schutt94}, $Q$ appears as a tool in proving that a convex body with a sequence of homothetic floating bodies must be an ellipsoid.

For $q\in M$, denote by $T_{q}M=H(n(q),q)$ the tangent plane to $M$ at $q$ and, for $t>0$, define $\gamma_q(t)$ as the centroid of the region $R(n(q),q+t\xi(q))$, where $\xi(q)$
is the affine normal to $M$ at $q$.  
We shall consider two symmetric bilinear forms defined on $T_{q}M$: the Blaschke metric $h$ which is positive definite and $h_S$ defined as $h_S(X,Y)=h(X,SY)$,
where $S$ is the shape operator. By identifying $H(\gamma_q(t))$ with $T_qM$, the normalized hessian $Q(\gamma_q(t))$ can also be seen as a symmetric bilinear form in $T_{q}M$. 
The  main result of the paper says that 
$$
Q(\gamma_q(t))=h(q)+th_S(q)+O(t^2),
$$
where $O(t^{k})$ indicates a quantity such that $\lim_{t\to 0}\frac{O(t^{k})}{t^{k-\epsilon}}=0$, for any $\epsilon>0$. This result can be regarded as a geometric interpretation of  the Blaschke metric and the shape operator in terms of the volume distance.

\medskip
\paragraph*{Acknowledgements.}
The authors want to thank Professor Peter J.Giblin for stimulating discussions during the preparation of this paper. 

\section{Hessian of the volume distance}

\subsection{Notation}

Consider a strictly convex hypersurface $M\subset\R^{N+1}$, possibly with a non-empty boundary $\partial M$. Denote by $H(n,p)\subset \R^{N+1}$ the hyperplane passing through $p\in\R^{N+1}$ with 
normal $n\in S^N$. 
For $p\in\R^{N+1}$, denote by $E(p)\subset S^N$ the set of unitary vectors $n$ whose corresponding hyperplane $H(n,p)$ intersects $M-\partial M$ transversally at a closed hypersurface $\Gamma(n,p)\subset H(n,p)$ 
bounding a region $R(n,p)\subset H(n,p)$ containing $p$ in its interior  
and such that the region $U(n,p)$ bounded by $R(n,p)$ and $M$, with $n$ pointing outwards, has finite volume $V(n,p)$ (see figure \ref{regionU}). 
Denote by $D_1\subset\R^{N+1}$ the set of $p\in\R^{N+1}$ such that $E(p)\neq\emptyset$  and the infimum 
$\inf\{V(n,p)|\ n\in E(p)\}$ is attained at $E(p)$. When $n\in E(p)$ attains this minimum, we call the pair $(n,p)$ minimizing and $v(p)=V(n,p)$ the volume distance to $M$. 
We remark that if $M$ is a closed hypersurface enclosing a convex region, then the domain $D_1$ of the volume distance is all the enclosed region. 

\begin{figure}[htb]
 \centering
 \includegraphics[width=0.40\linewidth]{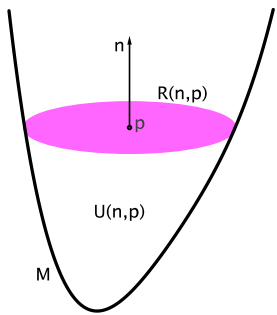}
 \caption{ The section $R(n,p)$ and the enclosed region $U(n,p)$.}
\label{regionU}
\end{figure}

For $q\in M$, denote by $\xi(q)$ the affine normal vector pointing to the convex side of $M$. Along this paper, we shall call a  half-neighborhood of $M$ any set of the form
$$
\{q+t\xi(q)|\ q\in M, 0\leq t<T(q)\},
$$
where $T(q)>0$ is some smooth function of $q$.

Close to a pair $(n_0,p_0)$,
consider cartesian coordinates $(x,z)\in \R^N\times I$, $I=(-\epsilon,\epsilon)$ such that $p_0=(0,0)$ and $n_0=(0,1)$.  To describe the hypersurface $M$ in a neighborhood of 
$H(n_0,p_0)$, 
consider cylindrical coordinates $(r,\eta,z)$, where $x=r\eta$, $\eta\in S^{N-1}$, $r>0$. Then
 $M$ is described by $r=r(\eta,z)$, for some smooth function $r$ (see figure \ref{sectionrz}). We write 
\begin{equation}\label{EqM}
r(\eta,z)=r(\eta,0)+r_z(\eta,0)z+O(z^2),
\end{equation}
for $z$ close to $0$. 

\begin{figure}[htb]
 \centering
 \includegraphics[width=0.50\linewidth]{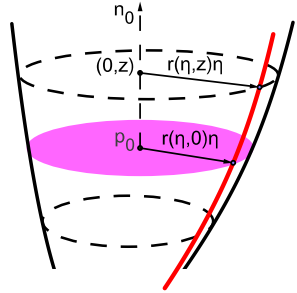}
 \caption{The curve $r=r(\eta,z)$ with fixed $\eta\in S^N$.}
\label{sectionrz}
\end{figure}

\subsection{Smoothness of the volume distance $v$ in a half-neighborhood of $M$ }

The derivative $\frac{\partial V}{\partial n}(n,p_0)$ can be regarded as a linear functional on $T_nS^{N}$, which can be identified with $H(n,p_0)$. 
The proof of next proposition can be found in \cite{Leichtweiss98}, p. 166. 

\begin{proposition}  
Denote by $\overline{p}(n,p)$ the center of gravity of $R(n,p)$ and by $b(n,p)$ the $N$-dimensional volume of the region $R(n,p)$. 
Then 
\begin{equation}\label{centroide}
\frac{\partial V}{\partial n}(n,p)=-b(n,p)\left(\overline{p}(n,p)-p\right).
\end{equation}
Thus, a pair $(n,p)$ is critical if and only if  $\overline{p}(n,p)=p$. 
\end{proposition}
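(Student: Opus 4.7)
The plan is to compute the directional derivative of $V(\cdot,p)$ at $n$ along an arbitrary $a\in T_nS^N$ and to match it against the value $-b(n,p)(\overline{p}(n,p)-p)\cdot a$ under the canonical identification $T_nS^N\cong H(n,p)-p$.

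First I would choose local Cartesian coordinates $(x,z)\in\R^N\times\R$ with $p=0$ and $n=(0,\dots,0,1)$, so that $H(n,p)=\{z=0\}$, and the convention that $n$ points outwards from $U$ places $U(n,p)$ in $\{z\le 0\}$. For $a=(a_0,0)\in T_nS^N$ I would parametrize $n_s=(sa_0,\sqrt{1-s^2|a_0|^2})$ on $S^N$, so $\dot n_0=a$. The tilted hyperplane $H_s=H(n_s,p)$ is then the graph $z=z_{H_s}(x)=-s(a_0\cdot x)+O(s^2)$. Because the intersection $H_0\cap M=\Gamma(n,p)$ is transverse, $M$ is locally a graph $z=z_M(x)$ over a neighborhood of $R(n,p)\subset\R^N$, with $z_M<0$ on $R(n,p)$ and $z_M=0$ exactly on $\Gamma(n,p)$.

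Next, since $U(n_s,p)$ is fibered over its projection $R_s\subset\R^N$ by vertical segments from $z_M(x)$ to $z_{H_s}(x)$, Fubini gives
\[
V(n_s,p)=\int_{R_s}\bigl(z_{H_s}(x)-z_M(x)\bigr)\,dx.
\]
Differentiating in $s$ by the Leibniz/Reynolds rule, the contribution of the moving domain $R_s$ is a boundary integral of the integrand over $\partial R_0=\Gamma(n,p)$ against the in-plane normal velocity, and this vanishes because $z_{H_0}(x)-z_M(x)$ is identically zero on $\Gamma(n,p)$. Only the pointwise derivative of the integrand survives:
\[
\left.\frac{d}{ds}V(n_s,p)\right|_{s=0}=-\int_{R(n,p)}(a_0\cdot x)\,dx=-b(n,p)\,(a_0\cdot\overline{x}),
\]
where $\overline{x}\in\R^N$ is the centroid of $R(n,p)$ regarded as a subset of $\R^N$. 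Since $\overline{p}(n,p)=(\overline{x},0)$ and $p=0$, the right-hand side equals $-b(n,p)\,a\cdot(\overline{p}(n,p)-p)$, which proves formula \eqref{centroide} because $a$ was arbitrary; the critical-point statement is immediate from $b(n,p)>0$.

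The main obstacle I anticipate is not any hard estimate but careful bookkeeping of signs and of the identification $T_nS^N\cong H(n,p)-p$: one must match the "outward" orientation of $n$ with the choice of $U$ as the lower region and with the sign of $\partial_s z_{H_s}$. As an independent check, the same answer should fall out of the transport formula $dV/ds=\int_{\partial U_s}V_\nu\,dA$, in which the piece of $\partial U_s$ sitting on the fixed hypersurface $M$ contributes nothing (its normal velocity is zero) and the piece on $H_s$ has outward normal velocity $-(a_0\cdot x)$ at $x\in R(n,p)$, producing the same horizontal-moment integral.
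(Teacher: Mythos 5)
Your computation is correct and reaches the right formula with the right signs (I checked the sign convention against the off-center ball: tilting $n$ toward the centroid's opposite side does increase the cap volume, matching $-b(\overline{p}-p)$). But it is worth noting that the paper does not prove this proposition at all: it simply cites Leichtweiss, \emph{Affine Geometry of Convex Bodies}, p.~166, so your first-variation argument is a self-contained substitute rather than a parallel of the paper's proof. One technical point deserves repair. You justify the Fubini representation $V(n_s,p)=\int_{R_s}(z_{H_s}-z_M)\,dx$ by saying that transversality of $H_0\cap M$ makes $M$ a graph over a neighborhood of $R(n,p)$; transversality only gives the graph property near $\Gamma(n,p)$, and the cap $U(n,p)$ can well be ``more than a hemisphere,'' in which case its $M$-boundary is not a vertical graph over $R(n,p)$ and your formula for $V(n_s,p)$ is not literally valid. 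The fix is cheap and does not change your answer: compute the difference $V(n_s,p)-V(n_0,p)$ directly as the signed volume of the wedge between $H_s$ and $H_0$ inside the body. Over each interior point $x$ of $R(n,p)$ this wedge has height $z_{H_s}(x)=-s(a_0\cdot x)+O(s^2)$, the error near $\Gamma(n,p)$ (where the vertical segment may leave the body) is confined to an $O(s)$-collar on which the height is also $O(s)$, hence contributes $O(s^2)$, and the $s$-independent part of $U$ never enters. This yields the same first derivative $-\int_{R(n,p)}(a_0\cdot x)\,dx=-b(n,p)\,a\cdot(\overline{p}(n,p)-p)$ without any global graph hypothesis; your Reynolds-transport cross-check is in fact exactly this argument and is the cleaner of the two routes you propose.
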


The second derivative $\frac{\partial^2V}{\partial n^2}(n,p)$ can be seen as a linear operator of $T_nS^{N}$. Next proposition, whose proof can be found in \cite{Leichtweiss98}, p. 168, describe
this linear operator in the above defined cylindrical coordinates.

\begin{proposition}
Denote
$\M_N$ the symmetric positive definite $N\times N$ matrix $\eta\cdot\eta^t$, where $\eta$ is a column vector 
and $\eta^t$ its transpose. 
We have that 
\begin{equation}\label{d2vdn2}
\frac{\partial^2V}{\partial n^2}(n_0,p_0)=\int_{S^{N-1}}r^{N+1}(\eta,0)r_z(\eta,0)\M_N d\eta.
\end{equation}
\end{proposition}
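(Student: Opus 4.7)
The plan is to parametrize unit vectors near $n_0=(0,1)$ by their tangential component $v\in\R^N$ through $n(v)=(v,\sqrt{1-|v|^2})$, and to compute the ordinary Euclidean Hessian of the composed function $v\mapsto V(n(v),p_0)$ at $v=0$. A straightforward chain-rule calculation shows that this Hessian coincides, as a bilinear form on $\R^N\cong T_{n_0}S^N$, with the operator $\partial^2V/\partial n^2(n_0,p_0)$: the correction involving $\partial^2 n/\partial v_i\partial v_j\vert_0$ (which is parallel to $n_0$) vanishes because Proposition 2.1 places $\partial V/\partial n(n_0,p_0)$ in $n_0^\perp$.

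Pulling Proposition 2.1 back to $v$-coordinates and writing $\overline{p}(n(v),p_0)-p_0=(\overline{X}(v),\overline{z}(v))\in H(n(v),p_0)$ (so that $\overline{z}=-v\cdot\overline{X}/w$), I obtain
$$\frac{\partial}{\partial v_i}V(n(v),p_0)=-b(v)\Big[\overline{X}_i(v)+\frac{v_i\,v\cdot\overline{X}(v)}{w(v)^2}\Big].$$
Differentiating once more with respect to $v_j$ and evaluating at $v=0$, every summand carrying $v_i$, $v\cdot\overline{X}$, or $\partial_{v_j}w\vert_0=0$ drops out, so only $-\partial_{v_j}(b\overline{X}_i)\vert_0$ survives. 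Since $b\overline{X}_i=\int_{R'(v)}X_i\,w^{-1}\,dX$, where $R'(v)\subset\R^N$ is the projection of the section $R(n(v),p_0)$ onto the $X$-hyperplane, Reynolds' transport theorem yields a vanishing interior term (again because $\partial_{v_j}w\vert_0=0$) and leaves only the boundary contribution $-\int_{\partial R'(0)}X_i\,V_{n,j}(0)\,dS$, where $V_{n,j}$ is the normal velocity of $\partial R'(v)$ in direction $v_j$.

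To evaluate this boundary integral I describe $\partial R'(v)$ in cylindrical coordinates as $X=\rho(\eta,v)\eta$, with $\rho$ determined implicitly by the requirement $(\rho\eta,-\rho\,v\cdot\eta/w)\in M$, i.e.\ $\rho=r(\eta,-\rho\,v\cdot\eta/w)$. Implicit differentiation at $v=0$, combined with~\eqref{EqM}, gives $\partial_{v_j}\rho\vert_0=-r(\eta,0)\,r_z(\eta,0)\,\eta_j$. A second Reynolds expansion with constant integrand $X_i$ converts the boundary integral into $\partial_{v_j}\vert_0\int_{R'(v)}X_i\,dX$; cylindrical integration gives $\int_{R'(v)}X_i\,dX=\int_{S^{N-1}}\rho^{N+1}\eta_i/(N+1)\,d\eta$, whose $v_j$-derivative at $v=0$ is $-\int_{S^{N-1}}r^{N+1}(\eta,0)\,r_z(\eta,0)\,\eta_i\eta_j\,d\eta$. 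Reassembling signs and recalling that $(\M_N)_{ij}=\eta_i\eta_j$ produces the claimed formula.

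The principal delicacy is the compound vanishing at $v=0$: both the first- and second-order interior variations disappear because $\partial_{v_j}w\vert_0=0$ and $v,\overline{z}$ vanish there, so the entire Hessian is carried by the tangential motion of the section boundary $\partial R'(v)$. Capturing that motion requires implicit differentiation, since $\rho$ is defined only implicitly; once this is in place, the remainder is a direct cylindrical integration.
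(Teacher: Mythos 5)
Your argument is correct, and I could not find a gap in it: the reduction of the spherical Hessian to the chart Hessian is justified exactly as you say (the second-order term $\partial^2 n/\partial v_i\partial v_j|_0=-\delta_{ij}n_0$ is annihilated because \eqref{centroide} puts $\partial V/\partial n$ in $n_0^\perp$, and the graph chart is normal to first order at $v=0$); the interior variations of $b\,\overline{X}_i=\int_{R'(v)}X_i w^{-1}dX$ do vanish at $v=0$ since $\partial_{v_j}w|_0=0$; and the implicit differentiation of $\rho=r(\eta,-\rho\, v\cdot\eta/w)$ gives $\partial_{v_j}\rho|_0=-r\,r_z\,\eta_j$, which after the cylindrical integration $\int_{R'(v)}X_i\,dX=\int_{S^{N-1}}\rho^{N+1}\eta_i/(N+1)\,d\eta$ yields precisely $\int_{S^{N-1}}r^{N+1}r_z\,\eta_i\eta_j\,d\eta$ with the correct sign (I checked the sign independently on the parabola $z=x^2/2$ in the plane, where both sides equal $2\sqrt{2z_0}$, and it is consistent with positive definiteness when $r_z>0$). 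Note that the paper does not prove this proposition at all --- it defers to Leichtweiss, p.~168 --- so there is no in-paper argument to compare against; your derivation, which differentiates the centroid formula \eqref{centroide} once more and isolates the boundary motion of the section, is the natural self-contained route and is essentially the classical one. The only stylistic quibble is the ``second Reynolds expansion'' step, which merely undoes the first one; you could instead pass directly from $-\partial_{v_j}\int_{R'(v)}X_i\,dX\big|_0$ to the cylindrical computation and skip the boundary-integral detour entirely.
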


If $r_z(\eta)>0$, for any $\eta\in S^{n-1}$, then formula \eqref{d2vdn2} implies $\frac{\partial^2 V}{\partial n^2}(n_0,p_0)$ is positive definite. Based on this, we can prove 
the following proposition:

\begin{proposition}\label{smoothD}
There exists a half-neigborhood $D\subset D_1$ of $M$ such that for any $p\in D$ there exists a smooth function $n=n(p)$ such that the pair $(n(p),p)$ is minimizing
and  $\frac{\partial^2 V}{\partial n^2}(n(p),p)$ is positive definite. 
\end{proposition}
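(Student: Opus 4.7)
The plan is to apply the implicit function theorem to the critical equation $\frac{\partial V}{\partial n}(n,p)=0$ in order to solve for $n$ as a smooth function of $p$. The IFT requires invertibility of $\frac{\partial^2 V}{\partial n^2}(n,p)$, and the proposition asks for the stronger positive-definiteness. So the heart of the argument is to verify, for $p$ in a sufficiently thin half-neighborhood of $M$ and at a minimizing direction $n$ for that $p$, the hypothesis $r_z(\eta,0)>0$ (for every $\eta\in S^{N-1}$) of formula \eqref{d2vdn2}. Since $\int_{S^{N-1}}\M_N\,d\eta$ is a positive multiple of the identity, a strictly positive weight $r^{N+1}r_z$ then delivers positive definiteness of the weighted integral.

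The key geometric input is that at a minimizing pair $(n,p)$ the region $U(n,p)$ is, by the minimization, the \emph{smaller} of the two pieces into which $H(n,p)$ cuts the convex body bounded by $M$, and since $n$ points out of $U$, the larger piece lies on the $+n$ side. The radial function $r(\eta,z)$ describes the intersection of $M$ with the half-plane spanned by $\eta$ and the $n$-axis: the value $r(\eta,0)$ is the distance from the axis to the boundary hypersurface $\Gamma(n,p)$, while $r(\eta,z)$ for $z>0$ is the corresponding radius inside the larger piece. By strict convexity of $M$, $r(\eta,z)$ is strictly increasing in $z$ at $z=0$, so $r_z(\eta,0)>0$. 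Equivalently, since $M$ bends around to enclose $U$, the outward Euclidean normal to $M$ at every point of $\Gamma$ has a strictly negative $n$-component, which via the tangency relation between this normal and the surface parametrization yields the same conclusion.

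Once positive definiteness is known at one minimizing pair $(n_0,p_0)$, the IFT produces a unique local smooth critical branch $p\mapsto n(p)$ with $n(p_0)=n_0$. To cover a half-neighborhood, I would parametrize via $(q,t)\mapsto q+t\xi(q)$: for each interior $q\in M$ and each small $t>0$, the hypotheses $p\in D_1$ and continuity of $V$ deliver a minimizer, and the fact that $v(p)\to 0$ as $t\to 0$ isolates this minimizer from any competing critical values, so the IFT-branch coincides with the genuine global minimum. Patching the local branches by uniqueness along curves in the half-neighborhood yields a smooth $n:D\to S^N$ with the required properties.

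The main obstacle is verifying $r_z(\eta,0)>0$ at every minimizing pair in a half-neighborhood, which reduces to proving that the minimizing cap $U(n,p)$ is the smaller of the two pieces. This requires controlling the behavior of the minimizer $n$ as $p$ approaches $M$: one needs to show, by a continuity/compactness argument together with the fact that $V(\cdot,p)$ along any direction not nearly tangent to $M$ stays bounded below, that the minimizing $n$ stays close to the outward Euclidean normal at the nearest boundary point of $M$, so that $U(n,p)$ is forced to be a small cap. Together with the standard IFT patching, this closes the argument.
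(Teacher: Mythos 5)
Your proposal is correct and follows essentially the same route as the paper: both arguments hinge on showing that for $p$ near $M$ the minimizing section is a small cap, so that $r_z>0$ on $\Gamma(n,p)$ and formula \eqref{d2vdn2} gives positive definiteness, after which uniqueness and smoothness of $n(p)$ follow. The only cosmetic difference is that you invoke the implicit function theorem plus isolation of the minimizer where the paper deduces uniqueness from convexity of $n\mapsto V(n,p)$ on the relevant set of directions.
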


\begin{proof} 
Given $q\in M$ consider a neighborhood $W$ of $q$ in $M$ with the following property: for any pair $(n,p)$ such that $\Gamma(n,p)\subset W$, $r_z(n,p)$ is strictly positive. 
For $p$ fixed, denote by $E_1(p)=\{n\in S^{N-1}|\ \Gamma(n,p)\subset W\}$. 

There is a half-neighborhood $U(q)$ of $q$  such that for any $p\in U(q)$, there exists a minimizing $n(p)\in E_1(p)$ and any minimizing pair
$n(p)$ must be in $E_1(p)$. Since $r_z(n,p)$ is strictly positive, the map $n\in E_1(p)\to V(n,p)$ is convex, so the minimizer $n(p)$ is unique. Considering $D=\cup_{q\in M-\partial M}U(q)$, 
we complete the proof of the proposition.
\end{proof}

\subsection{Derivatives of the volume distance}

Consider $D$ the half-neighborhood of $M$ given by proposition \ref{smoothD} and let $p\in D$. 
Recall that 
\begin{equation}\label{eq:Definev}
v(p)=V(n(p),p).
\end{equation}

\begin{lemma}
We have that 
\begin{equation}\label{eq:dVdp}
\frac{\partial V}{\partial p}(n,p)= b(n,p)n.
\end{equation}
As a consequence, 
\begin{equation}\label{eq:dvdp}
Dv(p)= b(n(p),p)n(p).
\end{equation}
\end{lemma}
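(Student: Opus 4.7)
The plan is to prove \eqref{eq:dVdp} first from a direct geometric computation of how the volume $V(n,p)$ depends on $p$ when the normal $n$ is held fixed, and then to derive \eqref{eq:dvdp} as an immediate consequence via the chain rule together with the critical point condition \eqref{eq:critical}. The whole argument is short; there is no deep step.

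For \eqref{eq:dVdp}, I fix $n$ and decompose an arbitrary variation $v\in\R^{N+1}$ of $p$ as $v=\lambda \n+w$ with $\langle w,\n\rangle=0$. The tangential piece is immediate: if $p$ is replaced by $p+sw$ with $w\perp \n$, then $p+sw$ still lies in the affine hyperplane through $p$ normal to $\n$, so $H(n,p+sw)=H(n,p)$ for all $s$. Therefore $\Gamma(n,\cdot)$, $R(n,\cdot)$ and $U(n,\cdot)$ are constant in $s$, and $\frac{\partial V}{\partial p}(n,p)\cdot w=0$. For the normal piece, replacing $p$ by $p+sn$ rigidly translates the hyperplane $H(n,p)$ by $s$ in the direction $+\n$. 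Since $\n$ points out of $U(n,p)$, the region $U(n,p+s\n)$ is obtained from $U(n,p)$ by adjoining a thin slab between $H(n,p)$ and $H(n,p+s\n)$; writing the slab volume by Fubini in the coordinate $z=\langle \cdot-p,\n\rangle$ as $\int_0^s A(z)\,dz$, where $A(z)$ is the $N$-volume of the horizontal section at height $z$ and $A(0^+)=b(n,p)$, and differentiating at $s=0$ yields
$$
\frac{\partial V}{\partial p}(n,p)\cdot \n = b(n,p).
$$
Combining the two directional computations gives \eqref{eq:dVdp}.

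For \eqref{eq:dvdp}, I differentiate $v(p)=V(n(p),p)$ by the chain rule,
$$
Dv(p)=\frac{\partial V}{\partial n}(n(p),p)\,Dn(p)+\frac{\partial V}{\partial p}(n(p),p).
$$
By Proposition \ref{smoothD} the pair $(n(p),p)$ is minimizing, hence \eqref{eq:critical} makes the first term vanish, and substituting \eqref{eq:dVdp} into the second yields $Dv(p)=b(n(p),p)\,n(p)$. The only mildly technical ingredient is the slab-volume calculation above, which rests only on Fubini and the continuity of the cross-sectional area $A(z)$ at $z=0^+$; I do not anticipate any serious obstacle.
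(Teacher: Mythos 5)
Your proposal is correct and follows essentially the same route as the paper: the tangential invariance of $H(n,p)$ gives the vanishing of the derivative along $H(n,p)$, the slab computation gives $V(n,p+tn)-V(n,p)=tb(n,p)+O(t^2)$, and \eqref{eq:dvdp} follows by the chain rule with the $\frac{\partial V}{\partial n}$ term killed by the minimizing condition \eqref{eq:critical}. Your write-up merely makes explicit the chain-rule cancellation that the paper leaves implicit in ``differentiating \eqref{eq:Definev}''.
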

\begin{proof}
Since $p\to V(n,p)$ is constant along the hyperplane $H(n,p)$, we conclude that $\frac{\partial V}{\partial p}(n,p)$ is parallel to
$n$. Also, for $t$ small, 
$$
V(n,p+tn)-V(n,p)=tb(n,p)+O(t^2),
$$
and thus the first formula is proved. Now differentiating \eqref{eq:Definev} we obtain \eqref{eq:dvdp}.
\end{proof}

\begin{proposition}
The normalized hessian of $v$ is exactly $Q^{-1}$, i.e., 
\begin{equation*}
-\frac{1}{b(p)} \left. D^2v(p) \right|_{H(p)}=Q^{-1}.
\end{equation*}
\end{proposition}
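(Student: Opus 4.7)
The plan is to compute $D^2v(p)$ directly from the expression $Dv(p)=b(n(p),p)n(p)$ given in \eqref{eq:dvdp}, differentiate implicitly the critical equation $\frac{\partial V}{\partial n}(n(p),p)=0$ to get a formula for $Dn(p)$, and then assemble the two identities.

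First I would differentiate \eqref{eq:dvdp} in a direction $w_1\in\R^{N+1}$ and take the inner product with $w_2\in H(p)$. Using that $w_2\cdot n(p)=0$ for $w_2\in H(p)$, the term coming from differentiating the scalar factor $b(n(p),p)$ drops out, leaving
\begin{equation*}
D^2v(p)|_{H(p)}(w_1,w_2)\;=\;b(p)\,Dn(p)(w_1)\cdot w_2,\qquad w_1,w_2\in H(p).
\end{equation*}
Since $n(p)\in S^N$, the image of $Dn(p)$ lies in $T_{n(p)}S^N$, which we identify with the translate of $H(p)$; thus $D^2v(p)|_{H(p)}$ corresponds, as a linear operator on $H(p)$, to $b(p)\,Dn(p)|_{H(p)}$.

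Next I would use the implicit function theorem on $\frac{\partial V}{\partial n}(n(p),p)=0$. Differentiating in a direction $w\in H(p)$ gives
\begin{equation*}
\frac{\partial^2 V}{\partial n^2}(n(p),p)\,Dn(p)(w)\;+\;\frac{\partial^2 V}{\partial n\partial p}(n(p),p)(w)\;=\;0.
\end{equation*}
The mixed partial can be computed from \eqref{eq:dVdp}: since $\frac{\partial V}{\partial p}(n,p)\cdot w=b(n,p)\,n\cdot w$, differentiating in a tangent direction $\mu\in T_nS^N$ produces a term proportional to $n\cdot w$ (which vanishes when $w\in H(p)$) plus $b(n,p)\mu\cdot w$. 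By symmetry of the second derivative we conclude that $\frac{\partial^2 V}{\partial n\partial p}(n(p),p)(w)$, viewed as an element of $T_{n(p)}S^N\cong H(p)$, equals $b(p)w$ for every $w\in H(p)$.

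Combining, for $w\in H(p)$ we obtain $\frac{\partial^2V}{\partial n^2}(n(p),p)\,Dn(p)(w)=-b(p)\,w$, i.e., $Q(p)\circ Dn(p)|_{H(p)}=-\mathrm{Id}$, so $Dn(p)|_{H(p)}=-Q(p)^{-1}$. Substituting into the expression for $D^2v(p)|_{H(p)}$ yields $-\frac{1}{b(p)}D^2v(p)|_{H(p)}=Q(p)^{-1}$, as desired. The main technical point to be careful with is the bookkeeping of tangent-space identifications: $\frac{\partial V}{\partial n}$ is a covector on $T_{n(p)}S^N$, and one must verify that, under the identification of $T_{n(p)}S^N$ with $H(p)$ via parallel translation, $Q$ and $Dn|_{H(p)}$ are genuine endomorphisms of $H(p)$ composable in the way claimed; aside from this, everything is a direct differentiation.
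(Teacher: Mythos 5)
Your proposal is correct and follows essentially the same route as the paper: differentiate $Dv(p)=b(n(p),p)\,n(p)$ and use orthogonality to $H(p)$ to get $D^2v|_{H(p)}=b(p)\,Dn|_{H(p)}$, then implicitly differentiate the critical equation and compute the mixed partial $\frac{\partial^2 V}{\partial n\partial p}=b(p)I+\frac{\partial b}{\partial n}n$ from \eqref{eq:dVdp} to solve for $Dn|_{H(p)}$. Your extra care with the identification of $T_{n(p)}S^N$ with $H(p)$ is a welcome refinement of bookkeeping the paper leaves implicit, but it is not a different argument.
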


\begin{proof}
Differentiating \eqref{eq:dvdp} with respect to $p$ and using that $n$ is orthogonal to $H(p)$, we obtain
\begin{equation*}
 \left. D^2v(p) \right|_{H(p)}=b(p)\left. \frac{dn}{dp} \right|_{H(p)}.
\end{equation*} 
On the other hand, if we differentiate \eqref{eq:critical} with respect to $p$ we obtain 
\begin{equation*}
\frac{\partial^2 V}{\partial n^2}(n,p)\frac{dn}{dp}+\frac{\partial^2 V}{\partial n\partial p}=0.
\end{equation*}
Now, from \eqref{eq:dVdp}, 
\begin{equation*}
\frac{\partial^2 V}{\partial n\partial p}=b(p)I+\frac{\partial b}{\partial n}n.
\end{equation*}
We conclude that
\begin{equation*}
\left. \frac{dn}{dp} \right|_{H(p)}=-b(p)\left[\frac{\partial^2 V}{\partial n^2}(n,p)\right]^{-1},
\end{equation*}
thus proving the proposition. 
\end{proof}

\section{Convergence to the Blaschke metric}

For $q\in M$, consider the centroid $\gamma_q(t),\ t>0$ of the region $R(n(q),q+t\xi(q))$, where $n(q)$ is orthogonal to $T_qM$ and $\xi(q)$ is the affine normal vector at $q$.
Then $Q(\gamma_q(t))$ is a symmetric bilinear form defined in $H(\gamma_t(q))$, which can be identified with $T_{q}M$. The aim of this section is to prove the following theorem:

\begin{theorem} \label{Asymptotic1}
For $q\in M$, 
\begin{equation}
Q(\gamma_q(t))=h(q)+O(t),
\end{equation}
and so $Q(\gamma_q(t))$ is converging to $h(q)$ when $t$ goes to $0$. 
\end{theorem}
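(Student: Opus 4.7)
The plan is to localize the computation of $Q(\gamma_q(t))$ at $q$ using affine coordinates adapted to $(q, \xi(q))$, apply formula (2.2) at $\gamma_q(t)$ after checking that the minimizing direction has not drifted, and then expand the resulting integrals in powers of $\sqrt{t}$, using a parity-in-$\eta$ argument to cancel the first-order correction.

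First, I would fix $q \in M$ and work in affine coordinates $(x, z) \in \R^N \times \R$ centred at $q$ with $T_q M = \{z = 0\}$ and $\partial_z$ aligned with $\xi(q)$, so that $M$ is locally the graph $z = f(x)$ with $f(0) = 0$, $\nabla f(0) = 0$ and Hessian $H = f_{ij}(0)$. By construction, $\gamma_q(t) = (\bar x(t), t)$ is the centroid of the horizontal slice $R_t = \{x : f(x) \leq t\}$ and lies in $H(n(q), \gamma_q(t))$; Proposition 2.3 then yields that $n(q)$ is a critical direction of $V(\cdot, \gamma_q(t))$, and the uniqueness established in Proposition 2.5 gives $n(\gamma_q(t)) = n(q)$. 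Consequently $H(\gamma_q(t)) = \{z = t\}$, identified with $T_q M$ by translation.

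In cylindrical coordinates $(r, \eta, \zeta)$ centred at $\gamma_q(t)$, with $\zeta = z - t$, the hypersurface $r = r(\eta, \zeta)$ is determined implicitly by $f(\bar x(t) + r\eta) = t + \zeta$. Expanding $f(x) = \frac{1}{2} H(x, x) + \frac{1}{6} C(x, x, x) + O(|x|^4)$ and noting that $\bar x(t) = O(t)$ (since to leading order $R_t$ is a centred ellipsoid, while the cubic term $C$ shifts the centroid only at first order in $t$), a direct order-by-order expansion yields
\begin{equation*}
r(\eta, 0) = r_0(\eta) + r_1(\eta) + O(t^{3/2}), \qquad r_0(\eta) = \sqrt{\frac{2t}{H(\eta, \eta)}},
\end{equation*}
with $r_1(\eta) = O(t)$. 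The key observation is that $r_1$ is \emph{odd} in $\eta$: its two contributions---one from the displacement term $\bar x(t)^T H \eta / H(\eta, \eta)$ and one from the cubic $r_0^2 C(\eta, \eta, \eta) / (6 H(\eta, \eta))$---both reverse sign under $\eta \mapsto -\eta$. An analogous expansion with the same parity structure holds for $r_\zeta(\eta, 0)$.

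Substituting into (2.2) and using that $\mathbf{M}_N = \eta \eta^T$ is even in $\eta$, the relative $O(\sqrt t)$ correction to the integrand is odd in $\eta$ and integrates to zero over $S^{N-1}$; the same parity argument eliminates the first-order correction in $b(\gamma_q(t)) = \mathrm{Vol}(R_t)$. Dividing, only the ratio of leading terms remains, so that
\begin{equation*}
Q(\gamma_q(t)) = \frac{\sqrt{\det H}}{\omega_N} \int_{S^{N-1}} \frac{\eta \eta^T}{H(\eta, \eta)^{(N+2)/2}} \, d\eta + O(t).
\end{equation*}
Evaluating the integral---for instance by computing $\int x x^T \, dx$ over the ellipsoid $\{x^T H x \leq 1\}$ both in radial coordinates and via the affine change $x = H^{-1/2} y$---produces an explicit constant matrix expressible in terms of $H$, which combined with the equi-affine normalization of $\xi(q)$ relating $H$ to the Blaschke metric identifies the leading term with $h(q)$. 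The main obstacle, beyond the bookkeeping of the expansion, is the verification that the centroid drift and the cubic deformation of the ellipsoidal boundary combine into a single $\eta$-odd correction at the relative $O(\sqrt t)$ level; without this parity cancellation the convergence rate would only be $O(\sqrt t)$.
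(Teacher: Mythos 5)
Your overall strategy coincides with the paper's: pass to affine coordinates adapted to $(q,\xi(q))$, observe that $\gamma_q(t)$ being the centroid of the horizontal slice forces $n(\gamma_q(t))=n(q)$ so that the second-derivative formula (2.2) applies to the slice $\{z=t\}$, expand $r(\eta,z)$ in powers of $z^{1/2}$, and divide the two integrals. Your explicit parity argument is in fact a point where you are more careful than the paper: the expansion of $r$ does contain a term of order $z$, coming from the cubic part of the graph (and from the centroid drift), which is a relative $O(\sqrt t)$ correction to the integrand; it disappears only after integration against the even functions $\eta_i\eta_j$ and $1$, precisely because it is odd in $\eta$. The paper writes $r=\sqrt2\,z^{1/2}+O(z^{3/2})$ and silently skips this term, so your cancellation argument is the right way to justify the $O(t)$ rate rather than a mere $O(\sqrt t)$.

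The genuine gap is in your last step. Keeping a general Hessian $H=f_{ij}(0)$, your leading term
\begin{equation*}
\frac{\sqrt{\det H}}{\omega_N}\int_{S^{N-1}}\frac{\eta\eta^T}{H(\eta,\eta)^{(N+2)/2}}\,d\eta
\end{equation*}
evaluates, by exactly the ellipsoid computation you propose, to $H^{-1}$: indeed $\int_{\{x^THx\le1\}}xx^T\,dx=\frac{\omega_N}{N+2}(\det H)^{-1/2}H^{-1}$, so the spherical integral equals $\omega_N(\det H)^{-1/2}H^{-1}$. On the other hand, the Blaschke metric at $q$ in these coordinates has matrix $(\det H)^{-1/(N+2)}H$, and the equi-affine normalization of $\xi(q)$ forces $\det H=1$, so your limit is $H^{-1}$ while $h(q)=H$; these agree only when $H=I$. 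This is not an arithmetic slip but a transformation-law issue: $\partial^2V/\partial n^2$ is a bilinear form in the normal direction and transforms contravariantly under linear changes of the $x$-coordinates, while $h$ transforms covariantly, so the identification of $Q$ with a form on $T_qM$ that makes the theorem true is the one made in coordinates where $h(q)$ is the standard inner product. The fix is cheap and is what the paper does: compose with a further unimodular linear map in the $x$-variables so that $H=I$ before invoking (2.2); then the leading term is $\frac{1}{\omega_N}\int\eta\eta^T\,d\eta=I=h(q)$ and the rest of your argument goes through. As written, however, the claim that the equi-affine normalization ``identifies the leading term with $h(q)$'' is false for general $H$.
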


\medskip\medskip

By applying a suitable affine transformation, we may assume that \newline $q=(0,0)$, the tangent plane $T_qM$ is $z=0$ and the affine normal at $q$ is $(0,1)$. 
Then, close to $q$,  the surface $M$ is defined by an equation of the form
\begin{equation}\label{Normal}
z=\frac{r^2}{2}+O(r^3).
\end{equation}
where $O(r^k)$ may depend on $\eta$ but satisfies $\lim_{r\to 0}\frac{O(r^k)}{r^{k-\epsilon}}=0$, for any $\epsilon>0$. 
In this coordinates $h(q)=I$ and $\xi(q)=(0,1)$. Thus we can choose $t=z$ and write $\gamma_q(z)=(\overline{x}(z),z)$.

The following lemma is the main tool for proving theorem \ref{Asymptotic1}:

\begin{lemma}\label{AsymptoticQ1}
Define 
\begin{equation}\label{eq:DefineQ1}
Q_1(z)=  \frac{1}{b(z)}\int_{S^{N-1}}r^{N+1}(\eta,z)r_z(\eta,z)  \M_N(\eta) d\eta,
\end{equation}
where $b(z)$ denotes the $N$-volume of the section parallel to the hyperplane $z=0$ at height $z$. Then 
\begin{equation*}
Q_1(z)=I+O(z).
\end{equation*}
\end{lemma}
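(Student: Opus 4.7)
The plan is to Taylor-expand both the numerator and denominator of $Q_1(z)$ in half-integer powers of $\sqrt{z}$ and observe that the first sub-leading correction drops out by an odd-even parity argument on the sphere.

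I would begin by refining the surface expansion \eqref{Normal} one step further to $z = r^2/2 + P(\eta)\, r^3 + O(r^4)$, where $P(\eta)$ is the restriction to $S^{N-1}$ of the homogeneous cubic appearing in the third-order Taylor expansion of the graphing function at $q$. The crucial feature is that $P(-\eta) = -P(\eta)$, so $P$ is odd in $\eta$. Introducing $s = \sqrt{2z}$, I would invert the surface equation to get $r(\eta, z) = s - P(\eta)\, s^2 + O(s^3)$, then differentiate (using $dz/ds = s$) to obtain $r_z = s^{-1} - 2P(\eta) + O(s)$. Multiplying out gives
\begin{equation*}
r^{N+1}(\eta,z)\, r_z(\eta,z) = s^N - (N+3)\, P(\eta)\, s^{N+1} + O(s^{N+2}).
\end{equation*}

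Next, I would integrate against $\M_N(\eta) = \eta\eta^t$ over $S^{N-1}$. The standard identity $\int_{S^{N-1}} \M_N(\eta)\, d\eta = \omega_N I$ (with $\omega_N$ the volume of the Euclidean unit $N$-ball) handles the leading term. The next term vanishes: $P(\eta)\M_N(\eta)$ is a homogeneous polynomial of odd degree $5$ in $\eta$, so $\int_{S^{N-1}} P(\eta)\M_N(\eta)\, d\eta = 0$. This yields $\int_{S^{N-1}} r^{N+1} r_z\, \M_N\, d\eta = \omega_N s^N I + O(s^{N+2})$. The same parity argument applied to $b(z) = \frac{1}{N}\int_{S^{N-1}} r^N\, d\eta$, using $r^N = s^N - N P(\eta) s^{N+1} + O(s^{N+2})$ together with $\int_{S^{N-1}} P(\eta)\, d\eta = 0$, gives $b(z) = \omega_N s^N + O(s^{N+2})$. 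Taking the quotient,
\begin{equation*}
Q_1(z) = \frac{\omega_N s^N I + O(s^{N+2})}{\omega_N s^N + O(s^{N+2})} = I + O(s^2) = I + O(z).
\end{equation*}

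The only real piece of care required is the bookkeeping: I must verify that the sub-leading coefficients $a_2(\eta), a_3(\eta), \ldots$ in the expansion of $r$, whose explicit form plays no role here, contribute only at order $O(s^{N+2})$ to both integrals, so that they are safely absorbed into the error once one divides by the $s^N$ leading behaviour. Apart from this routine tracking of orders, the entire argument is driven by the single geometric input that $P(\eta)$ is odd — a consequence of the affine normalization $h(q) = I$, $\xi(q) = (0,1)$ having already trivialized the quadratic part of the surface equation.
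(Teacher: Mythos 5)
Your proposal is correct and follows the same overall strategy as the paper's proof (expand $r$ and $r^{N+1}r_z$ in powers of $z^{1/2}$, integrate over $S^{N-1}$, divide by $b(z)$), but it is actually more careful than the paper at the one point that matters. The paper asserts, from $\lim_{r\to 0} r/(\sqrt{2}z^{1/2})=1$ alone, that $r(\eta,z)=\sqrt{2}z^{1/2}+O(z^{3/2})$, and then never sees any term of order $z^{(N+1)/2}$ in $r^{N+1}r_z$ or in $r^N$. That expansion is not justified by the stated limit and is in fact false in general: as the paper's own Example in Section 4 shows, the inversion of $z=\tfrac{r^2}{2}+O(r^3)$ produces a genuine term $-\tfrac{P_3(\eta)}{3}z$ at order $z$, proportional to the cubic part of the graph. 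Your expansion $r=s-P(\eta)s^2+O(s^3)$, $r^{N+1}r_z=s^N-(N+3)P(\eta)s^{N+1}+O(s^{N+2})$ keeps this term, and your observation that $P(\eta)\M_N(\eta)$ and $P(\eta)$ are odd in $\eta$, hence integrate to zero over $S^{N-1}$, is exactly the ingredient needed to show that the $z^{1/2}$-order correction to $Q_1$ vanishes and the error really is $O(z)$ rather than $O(z^{1/2})$. In short, your parity argument supplies the justification the paper's proof silently omits; without it the stated rate $Q_1(z)=I+O(z)$ (and hence the $O(t)$ rate in Theorem 3.1, which feeds into Theorem 4.3) would not follow. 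Your bookkeeping of the higher-order coefficients is routine, as you say, and the identity $\int_{S^{N-1}}\M_N\,d\eta=\omega_N I$ matches the paper's $\tfrac{\lambda}{N}\delta_{ij}$.
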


We now show how theorem \ref{Asymptotic1} follows from lemma \ref{AsymptoticQ1}. Since $\xi(q)$ is tangent to the centroid line (\cite{Simon93}, p.52), we have that $\overline{x}(z)=O(z^2)$. 
Now from equations \eqref{eq:DefineQ} and \eqref{d2vdn2} we conclude that $Q(\gamma_q(z))$ is $O(z^2)$-close to $Q_1(z)$.
Hence lemma \ref{AsymptoticQ1} implies that 
$$
Q(\gamma_q(z))=I+O(z),
$$
thus proving theorem  \ref{Asymptotic1}.

It remains then to prove lemma \ref{AsymptoticQ1}. 

\begin{proof} 
Since
$\lim_{r\to 0}\frac{r}{\sqrt{2}z^{1/2}}=1,$
we can write 
\begin{equation}\label{r(z)}
r(\eta,z)=\sqrt{2}z^{1/2}+O(z^{3/2}).
\end{equation}
Straightforward calculations from \eqref{r(z)} show that
$$
\frac{r^N}{N2^{N/2}}=\frac{1}{N}z^{N/2}+O(z^{N/2+1}).
$$
Differentiating $\frac{r^{N+2}}{N+2}$ with respect to $z$ leads to
\begin{equation*}
 \frac{r^{N+1}r_z}{2^{N/2}}=z^{N/2}+O(z^{N/2+1}).
\end{equation*}
The integral of $\eta_i\eta_j$ over $S^{N-1}$ is equal to
$\frac{\lambda}{N}\delta_{ij}$, where $\lambda=\lambda(N)$ is the Lebesgue measure of $S^{N-1}$ and $\delta_{ij}=1$, if $i=j$, and $0$, if $i\neq j$. Thus
the integral $L(i,j)$ of $r^{N+1}r_z\eta_i\eta_j$ satisfies
$$
\frac{L(i,j)}{2^{N/2}}= \frac{\lambda\delta_{ij}}{N}z^{N/2}+O(z^{N/2+1}).
$$
Also, calculating $b(z)$ as the integral of $r^N/N$ over $S^{N-1}$ we obtain 
$$
\frac{b(z)}{2^{N/2}}=\frac{\lambda}{N}z^{N/2}+O(z^{N/2+1}).
$$
Thus
$$
2^{N/2}b(z)^{-1}=\frac{N}{\lambda}z^{-N/2}+O(z^{-N/2+1}).
$$
and so $Q(z)(i,j)=b(z)^{-1}L(i,j)=\delta_{ij}+O(z)$. 
\end{proof}

\section{ Convergence to the shape operator }

Along this section, we shall use the notation of \cite{Nomizu94}: let $f:M\subset\R^N\to\R^{N+1}$ be the inclusion map and denote by  $\xi$ its normal vector field
pointing to the convex part of $M$. For $X,Y\in {\mathcal X}(U)$, we write
\begin{eqnarray*}
D_Xf_*(Y)&=&f_*(\nabla_XY)+h(X,Y)\xi\\
D_X\xi&=&-f_*(SX),
\end{eqnarray*}
where $\nabla$ denotes the Blaschke connection, $h$ is the positive definite Blaschke metric and $S$ is the shape operator. 
Denote by $\nu:M\to\R_{N+1}$ the corresponding co-normal immersion. 

Close to the hypersurface $M$, we write $p=\gamma_q(t)$, $q\in M$, $t\in[0,T)$, where $\gamma_q(t)$ is the centroid of the section through $q+t\xi(q)$ parallel to $T_qM$. 
Then $p$ is not necessarily on the normal line $q+t\xi(q)$, but we can write 
\begin{equation}\label{DefCent}
p=q+t\xi(q)+Z,
\end{equation}
for some $Z=Z(q,t)\in T_qM$,  with $Z=O(t^2)$ (see \cite{Simon93}, p.52). 
Differentiating  \eqref{DefCent} with respect to $t$ gives
\begin{equation}\label{Derivt}
\frac{\partial p}{\partial t}=\xi(q)+Z_t,
\end{equation}
for some $Z_t\in T_qM$, with $Z_t=O(t)$. 
We conclude that
\begin{equation*}
v_t(p)=Dv(p)\cdot\left(\xi(q)+Z_t\right)=Dv(p)\cdot\xi(q),
\end{equation*}
where for the last equality we have used the orthogonality of $Dv(p)$ and $H(p)$ (see equation \eqref{eq:dvdp}).
We have thus proved the following lemma:

\begin{lemma}
The derivative of $v$ is given by 
\begin{equation}\label{GradVCent}
Dv(p)=v_t(p)\ \nu(q),
\end{equation}
where $\nu(q)$ is the co-normal vector at $q\in M$ and $v_t(p)=\frac{d}{dt} v(\gamma_q(t))$.
\end{lemma}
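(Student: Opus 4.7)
The plan is to combine the identity $v_t(p) = Dv(p)\cdot\xi(q)$ derived in the paragraph just above with two basic structural facts: the direction of $Dv(p)$ coming from \eqref{eq:dvdp}, and the standard biorthogonality of the co-normal $\nu(q)$ with the affine normal $\xi(q)$. Once these are in place, the lemma reduces to a one-line computation.

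First I would identify the direction of $Dv(p)$. Formula \eqref{eq:dvdp} gives $Dv(p) = b(p)\,n(p)$, where $n(p)$ is the Euclidean unit normal to $H(p)$. Since $p=\gamma_q(t)$ is by construction the centroid of the section $R(n(q),q+t\xi(q))$ cut by a hyperplane orthogonal to $n(q)\perp T_qM$, that section lies in a translate of $T_qM$, and so by uniqueness of the minimizer (Proposition \ref{smoothD}) one has $n(p)=n(q)$, a Euclidean normal to $T_qM$. The co-normal $\nu(q)$ annihilates $f_*(T_qM)$ by its very definition and thus points along the same Euclidean line. Consequently $Dv(p) = c\,\nu(q)$ for some scalar $c=c(q,t)$.

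Next I would pin down $c$ by substituting $Dv(p) = c\,\nu(q)$ into the already-proven identity $v_t(p) = Dv(p)\cdot\xi(q)$ and invoking the normalization $\nu(q)\cdot\xi(q) = 1$ built into the co-normal (the convention of \cite{Nomizu94} adopted at the start of this section). This gives $c = v_t(p)$, which is exactly the claimed formula $Dv(p) = v_t(p)\,\nu(q)$.

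The main ``obstacle'' is not analytic at all but is simply the bookkeeping of directions: one must verify that $H(p)$ is genuinely parallel to $T_qM$ (immediate from $n(q)\perp T_qM$ and the definition of $\gamma_q$) and that the co-normal convention in use gives $\nu\cdot\xi = 1$. Once these are recognized, the proof is essentially a single line, as all the dynamic content of the lemma has already been done in the computation preceding its statement.
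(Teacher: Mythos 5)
Your proof is correct and follows essentially the same route as the paper, whose own argument is the computation immediately preceding the lemma establishing $v_t(p)=Dv(p)\cdot\xi(q)$. You in fact make explicit two steps the paper leaves implicit --- that $n(\gamma_q(t))=n(q)$ (so $Dv(p)$ is proportional to $\nu(q)$ via \eqref{eq:dvdp}) and the normalization $\nu(q)\cdot\xi(q)=1$ --- which is a welcome completion rather than a deviation.
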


\begin{lemma}\label{diagonal} For any $X\in T_qM$, 
\begin{equation*}
\lim_{t\to 0}\frac{1}{v_t}\cdot D^2v(X,\xi)=0.
\end{equation*}
\end{lemma}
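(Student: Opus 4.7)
The plan is to leverage the preceding lemma $Dv(p)=v_t(p)\,\nu(q)$ together with the defining property of the co-normal: $\nu(q)\cdot X=0$ for every $X\in T_qM$, while $\nu(q)\cdot\xi(q)=1$. It is this vanishing of $\nu(q)\cdot X$ that should force $D^2v(X,\xi)$ to be of smaller order than $v_t$.

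First I would use the symmetry of $D^2v$ to write
$$
D^2v(X,\xi)=D^2v(\xi,X)=\frac{d}{ds}\bigl(Dv(p+s\xi)\cdot X\bigr)\Big|_{s=0},
$$
treating $\xi=\xi(q)$ and $X$ as constant vectors. Substituting the formula of the preceding lemma and applying the product rule, the term in which $v_t$ is differentiated carries the factor $\nu(q(p))\cdot X$, which vanishes at $p=\gamma_q(t)$ because $q(p)=q$ and $\nu(q)$ annihilates $T_qM$. What remains is
$$
D^2v(\xi,X)=v_t(p)\cdot D_\eta\nu(q)\cdot X,\qquad \eta:=dq(p)(\xi)\in T_qM.
$$
The lemma thus reduces to the estimate $\eta=O(t)$, since $D_\eta\nu\cdot X$ is then bounded by a constant times $|\eta|$.

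To prove $\eta=O(t)$, I would differentiate the relation $p=q+t\xi(q)+Z(q,t)$ with $Z=O(t^2)$: the differential at $(q,t)$ sends $(Y,\tau)\in T_qM\times\R$ to $Y+\tau\xi(q)+tD_Y\xi+\tau Z_t+D_YZ$. Now $D_Y\xi=-f_*(SY)$ lies in $T_qM$, and $Z_t=O(t)$ and $D_YZ=O(t^2)$ are tangent to $M$ as well. Hence, in the splitting $\R^{N+1}=T_qM\oplus\R\xi(q)$, the differential equals the identity plus a perturbation of order $O(t)$ whose image is tangent to $M$. Inverting this and reading off the $T_qM$-component of the preimage of $\xi(q)$ yields $\eta=O(t)$, and the lemma follows.

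The delicate point is precisely this $O(t)$ estimate on $\eta$: one has to check that every correction term in the differential of $(q,t)\mapsto p$ is tangent to $M$, so that the $\xi(q)$-component equation yields $dt/dp(\xi)=1+O(t)$ while the $T_qM$-component equation only produces corrections of order $t$ for $\eta$. Once this is in hand, $D^2v(X,\xi)/v_t=D_\eta\nu(q)\cdot X=O(t)\to 0$.
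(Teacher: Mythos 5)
Your proof is correct, and it reaches the conclusion by a route that differs from the paper's in a way worth noting. Both arguments start by differentiating the identity $Dv(p)=v_t(p)\,\nu(q)$. The paper differentiates it in $t$ along the centroid curve $\gamma_q$ with $q$ \emph{fixed}: then $\nu(q)$ is constant, the right-hand side stays proportional to $\nu(q)$, and pairing with $X\in T_qM$ gives the exact identity $D^2v(\xi+Z_t,X)=0$; the leftover term $\tfrac{1}{v_t}D^2v(Z_t,X)$ is then controlled by Theorem~\ref{Asymptotic1} (boundedness of the normalized Hessian on $H(p)$) together with $Z_t=O(t)$. You instead differentiate the identity in the fixed direction $\xi(q)$, so the base point $q(p)$ moves; the cancellation $\nu(q)\cdot X=0$ kills the term where $v_t$ is differentiated, and the burden shifts to showing $\eta=dq(\xi)=O(t)$, which you obtain by inverting the differential of $(q,t)\mapsto q+t\xi(q)+Z$. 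Your version has the advantage of not invoking Theorem~\ref{Asymptotic1} at all --- only smoothness of the co-normal (or the structure equation $\nu_*(Y)(X)=-h(Y,X)$) and the same estimate $Z_t=O(t)$ --- and it makes transparent that the whole lemma reduces to the tangential drift of the base point being $O(t)$. One small imprecision: $D_YZ$ is not tangent to $M$; by the paper's own equation \eqref{DerivX} it decomposes as $\nabla_YZ+h(Y,Z)\xi(q)$, so the differential of $(q,t)\mapsto p$ does carry a normal correction. Since that correction is $h(Y,Z)=O(t^2)$, your conclusions $dt/dp(\xi)=1+O(t)$ and $\eta=O(t)$ survive unchanged, but the sentence claiming that every correction term is tangent to $M$ should be weakened to say that each correction is either tangent or of order $O(t^2)$.
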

\begin{proof}
Differentiate  equation \eqref{GradVCent} with respect to $t$ and use \eqref{Derivt} to obtain
\begin{equation*}
D^2v(\xi(q)+Z_t)=v_{tt}\nu(q).
\end{equation*}
Thus, for any $X\in T_qM$, 
$$
D^2v(\xi(q)+Z_t,X)= 0.
$$
So $D^2v(X,\xi)=-D^2v(X,Z_t)$ and hence
\begin{equation*}
\frac{1}{v_t}\cdot D^2v(X,\xi)=Q(\gamma_q(t))(X,Z_t).
\end{equation*}
By corollary \ref{Asymptotic1}, $Q(\gamma_q(t))$ is converging to $h$ and since $Z_t=O(t)$, we conclude that this last expression converges to $0$, thus proving the lemma.
\end{proof}

\begin{theorem}\label{Asymptotic2} The rate of convergence of the bi-linear form $Q(\gamma_q(t))$ to $h(q)$ is $h_S(q)$, i.e., 
$$
\lim_{t\to 0}\frac{Q(\gamma_q(t))(X,Y)-h(q)(X,Y)}{t}=h_S(q)(X,Y).
$$
for any $q\in M$, $X,Y\in T_qM$.
\end{theorem}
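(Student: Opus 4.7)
The plan is to refine the asymptotic analysis of Lemma \ref{AsymptoticQ1} by going one order further in $z=t$ and reading off the coefficient of $z$ as $h_S(q)$. Since the centroid estimate $\overline{x}(z)=O(z^2)$ already used after the statement of Lemma \ref{AsymptoticQ1} is unchanged, the relation $Q(\gamma_q(z))-Q_1(z)=O(z^2)$ persists, and it is enough to compute the $z$-coefficient of $Q_1(z)$. As in Theorem \ref{Asymptotic1}, I would fix an affine normalization so that $q=0$, $T_qM=\{z=0\}$, $\xi(q)=(0,1)$ and $h(q)=I$; in these coordinates $h_S(q)$ is represented by the matrix of the shape operator $S$ at $q$, so the task becomes extracting that matrix from the integral formula for $Q_1$.

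In the chosen coordinates the surface $M$ has an expansion of the form
\begin{equation*}
z=\tfrac{1}{2}r^2+c(\eta)\,r^3+P(\eta)\,r^4+O(r^5),
\end{equation*}
where $c$ is a homogeneous cubic determined by the Fubini--Pick form and $P$ is a homogeneous quartic determined by $S$, $C$ and $\nabla C$, subject to the apolarity relations $\sum_i C_{iij}(q)=0$ built into the affine normal. Inverting gives
\begin{equation*}
r(\eta,z)=\sqrt{2}\,z^{1/2}+u(\eta)\,z+w(\eta)\,z^{3/2}+O(z^2),
\end{equation*}
with $u$ proportional to $c$ and $w$ an explicit polynomial combination of $P$ and $c^2$. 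Plugging these into $r^{N+1}r_z$ (the numerator of $Q_1(z)$) and into $r^N/N$ (the integrand of $b(z)$) and grouping by powers of $z^{1/2}$, every $z^{1/2}$ and $z^{3/2}$ coefficient is an odd-degree polynomial in $\eta$, hence vanishes upon integration against $1$ or $\M_N(\eta)=\eta\eta^t$ over $S^{N-1}$. This recovers Lemma \ref{AsymptoticQ1} and shows that the first nonzero correction in $Q_1(z)-I$ is at order $z$, a specific linear combination of the even integrals
\begin{equation*}
\int_{S^{N-1}}c(\eta)^2\,\eta^i\eta^j\,d\eta\quad\text{and}\quad\int_{S^{N-1}}P(\eta)\,\eta^i\eta^j\,d\eta.
\end{equation*}

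The main obstacle is matching this explicit combination with the matrix of $S$. For this I would invoke the standard affine-geometric formula expressing $P(\eta)$ in the Blaschke expansion as a polynomial in $S$, $C$ and $\nabla C$, and then apply the elementary moment identities $\int_{S^{N-1}}\eta^i\eta^j\,d\eta=\tfrac{\lambda}{N}\delta_{ij}$ together with their fourth- and sixth-degree analogues; the apolarity condition collapses many of the resulting contractions, and what survives should simplify to exactly $S^i_j$. Finally, Lemma \ref{diagonal} is available to absorb the $o(z)$ error that appears when one passes between the coordinate-aligned quantity $Q_1(z)$ and the genuine normalized hessian $Q(\gamma_q(z))$ on the true minimizing hyperplane $H(\gamma_q(z))$, yielding the desired identity $Q(\gamma_q(t))(X,Y)=h(q)(X,Y)+t\,h_S(q)(X,Y)+o(t)$.
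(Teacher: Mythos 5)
Your outline is a genuinely different route from the paper's --- a brute-force expansion of the integral formula for $Q_1(z)$ one order beyond Lemma \ref{AsymptoticQ1} --- but as written it has a gap precisely at the step that constitutes the theorem. Everything up to and including the parity argument (the $z^{1/2}$ and $z^{3/2}$ coefficients of $r^{N+1}r_z$ are odd in $\eta$ and integrate to zero against $\M_N(\eta)$, so the first correction to $Q_1$ is at order $z$) is sound, as is the reduction from $Q(\gamma_q(z))$ to $Q_1(z)$ via $\overline{x}(z)=O(z^2)$. But the identification of the resulting order-$z$ coefficient --- a combination of $\int_{S^{N-1}}c^2\,\eta_i\eta_j\,d\eta$ and $\int_{S^{N-1}}P\,\eta_i\eta_j\,d\eta$ --- with the matrix of $S$ is deferred to a ``standard affine-geometric formula'' for the quartic term $P(\eta)$ in the Blaschke normal form, followed by fourth- and sixth-moment identities and apolarity, with the conclusion that the result ``should simplify to exactly $S^i_j$.'' That simplification \emph{is} the theorem; without writing down the expression of $P$ in terms of $S$, $C$ and $\nabla C$ in general dimension $N$ and actually performing the contractions, nothing has been proved. (The paper carries out exactly this computation, but only as a verification in the two-dimensional Example following the theorem, where it already calls the calculations ``long but straightforward.'') A secondary point: Lemma \ref{diagonal} does not play the role you assign to it. The $o(z)$ discrepancy between $Q_1(z)$ and $Q(\gamma_q(z))$ is controlled directly by the centroid estimate $\overline{x}(z)=O(z^2)$; Lemma \ref{diagonal} is about the mixed term $D^2v(X,\xi)/v_t$ and enters only in the paper's own argument.

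For comparison, the paper avoids the quartic expansion entirely. It differentiates the centroid relation $p=q+t\xi(q)+Z$ in a tangent direction $X$, which produces the factor $(I-tS)X$ via the structure equation $D_X\xi=-f_*(SX)$, and differentiates the gradient identity $Dv(p)=v_t\,\nu(q)$, using $\nu_X(Y)=-h(X,Y)$ to get $D^2v(D_Xp,Y)=-v_t\,h(X,Y)$. Dividing by $v_t$ yields $Q(\gamma_q(t))\bigl((I-tS)X,Y\bigr)=h(X,Y)+o(t)$, where the error terms are killed by $Z=O(t^2)$, Theorem \ref{Asymptotic1}, and Lemma \ref{diagonal}; the shape operator thus enters structurally rather than through moment integrals. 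If you want to complete your route, the missing ingredient is the explicit normal-form expansion of a Blaschke hypersurface to fourth order (generalizing Nomizu--Sasaki p.~47 to arbitrary $N$) together with the moment computations; otherwise the structural argument is both shorter and dimension-independent.
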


\begin{proof}
Observe first that if we differentiate \eqref{DefCent} in the direction $X\in T_qM$, we obtain
\begin{equation}\label{DerivX}
D_X(p)=(I-tS)X+\nabla_XZ+h(X,Z)\xi(q),
\end{equation}
with $\nabla_XZ=O(t^2)$ and $h(X,Z)=O(t^2)$.
Then differentiate  equation \eqref{GradVCent} in the direction of $X\in T_xM$ to obtain
\begin{equation*}
D^2v(D_X(p))=v_t\nu_X(q)+ X(v_t) \nu(q).
\end{equation*}
Thus, for $Y\in T_qM$, 
\begin{equation*}
D^2v(D_X(p),Y)=v_t\nu_X(q)(Y)=-v_th(X,Y)
\end{equation*}
(see \cite{Nomizu94}, p.57, for the last equality).
Expanding this equation using \eqref{DerivX} and dividing by $v_t$ we obtain
$$
Q(\gamma_q(t))(I-tSX,Y)-h(X,Y)=-Q(\gamma_q(t))(\nabla_XZ,Y)+h(X,Z)\frac{D^2v(\xi,Y)}{v_t}.
$$
Now, from lemma \ref{diagonal} and theorem \ref{Asymptotic1}, we conclude that
$$
\lim_{t\to 0}\frac{Q(\gamma_q(t))(X,Y)-h(X,Y)}{t}=h(SX,Y),
$$
thus proving the theorem. 
\end{proof}

\begin{example}
Consider the surface $M\subset\R^3$ described by the equation
$$
z=\frac{1}{2}\left(x^2+y^2\right)+\frac{c}{6}\left(x^3-3xy^2\right)+\frac{1}{24}(a_{40}x^4+4a_{31}x^3y+6a_{22}x^2y^2+4a_{13}xy^3+a_{04}y^4).
$$
For this surface $\xi(0,0)=(0,0,1)$ and we write
\begin{equation*}
z=\frac{r^2}{2}+\frac{r^3}{6}P_3(\theta)+\frac{r^4}{24}P_4(\theta),
\end{equation*}
where $\eta=(\cos(\theta),\sin(\theta)$,
\begin{equation*}
P_3(\theta)=c\left(\cos^3\theta-3\cos\theta\sin^2\theta\right)=c\cos(3\theta)
\end{equation*}
and
\begin{equation*}
P_4(\theta)=a_{40}\cos^4\theta+4a_{31}\cos^3\theta\sin\theta+6a_{22}\cos^2\theta\sin^2\theta+4a_{13}\cos\theta\sin^3\theta+a_{04}\sin^4\theta.
\end{equation*} 
It is not difficult to show that, in a neighborhood of $(0,0)$, the inverse function $r=r(z)$ satisfies 
\begin{equation*}
r(\theta,z)=\sqrt{2}z^{1/2} -\frac{P_3(\theta)}{3}z+\frac{5P_3^2(\theta)-3P_4(\theta)}{18\sqrt{2}} z^{3/2}+O(z^2).
\end{equation*}
From this equation, long but straightforward calculations show that $Q(z)=I+zA+O(z^2)$, where
\[
A=\left[
\begin{array}{ll}
\frac{c^2}{2}-\frac{1}{4}(a_{40}+a_{22}) & -\frac{1}{4}(a_{31}+a_{13}) \\
 -\frac{1}{4}(a_{31}+a_{13})  & \frac{c^2}{2}-\frac{1}{4}(a_{22}+a_{04})
\end{array}
\right].
\]
On the other hand, we can calculate the shape operator of $M$ at the origin following \cite{Nomizu94}, p.47. 
In this way we verify that $h_S=-A$, in accordance with theorem  \ref{Asymptotic2}. 
\end{example}

\end{document}